\newtheorem{theorem}{Theorem}[section]
\newtheorem*{theorem*}{Theorem}
\newtheorem{lemma}[theorem]{Lemma}
\newtheorem{corollary}[theorem]{Corollary}
\theoremstyle{definition}
\newtheorem{defin}[theorem]{Definition}
\newtheorem{ex}[theorem]{Example}
\theoremstyle{plain}
\newtheorem{proposition}[theorem]{Proposition}
\theoremstyle{remark}
\newtheorem{rem}[theorem]{Remark}
\newcommand{\se}{\operatorname{small}}
\title{Eliahou number, Wilf function and concentration of a numerical semigroup }
\author{Patricio Almir\'on}
\author{Julio Jos\'e Moyano-Fern\'andez}
\subjclass[2010]{Primary: 20M14; Secondary: 05A19.}
\keywords{Numerical semigroup, coin change problem, Wilf conjecture, Eliahou number, concentration}
\thanks{The first author was partially supported by Spanish Goverment, Ministerios de Ciencia e Innovaci\'on y de Universidades MTM2016-76868-C2-1-P. The second author was partially supported by the Spanish Government, Ministerios de Ciencia e Innovaci\'on y de Universidades, grant PGC2018-096446-B-C22, as well as by Universitat Jaume I, grant UJI-B2018-10.}
\address{Instituto de Matemática interdisciplinar (IMI) y departamento de \'{A}lgebra, Geometr\'{i}a y Topolog\'{i}a\\
	Facultad de Ciencias Matem\'{a}ticas\\
	Universidad Complutense de Madrid\\
	28040, Madrid, Spain.}
\email{palmiron@ucm.es}
\address{Universitat Jaume I, Campus de Riu Sec, Departamento de Matem\'aticas \& Institut Universitari de Matem\`atiques i Aplicacions de Castell\'o, 12071
	Caste\-ll\'on de la Plana, Spain}
\email{moyano@uji.es}
\begin{document}

\begin{abstract}
We give an estimate of the minimal positive value of the Wilf function of a numerical semigroup in terms of its concentration. We describe necessary conditions for a numerical semigroup to have negative Eliahou number in terms of its multiplicity, concentration and Wilf function. Also, we show new examples of numerical semigroups with negative Eliahou number. In addition, we introduce the notion of highly dense numerical semigroup; this yields a new family of numerical semigroups satisfying the Wilf conjecture. Moreover, we use the Wilf function of a numerical semigroup to prove that the Eliahou number of a highly dense numerical semigroup is positive under certain additional hypothesis. In particular, these results provide new evidences in favour of the Wilf conjecture.
\end{abstract}

\maketitle

\section{Introduction}

Let $\mathbb{N}$ denote the set of the nonnegative integer numbers. A numerical semigroup $\Gamma$ is an additive sub-semigroup of the monoid $(\mathbb{N},+)$ such that the greatest common divisor of all its elements is equal to $1$. The complement $\mathbb{N}\setminus \Gamma$ is therefore finite, and the elements of that complement are called gaps of $\Gamma$. Moreover, $\Gamma$ is finitely generated and it is not difficult to find a minimal system of generators of $\Gamma$, whose cardinality $e=e(\Gamma)$ is called the embedding dimension of $\Gamma$. The number $c=c(\Gamma)=\max (\mathbb{N}\setminus \Gamma)+1$ is called the conductor of $\Gamma$, and the number of elements in $\Gamma$ which are smaller than $c(\Gamma)$ is said to be the delta-invariant $\delta=\delta (\Gamma)$ of $\Gamma$. The Frobenius number of $\Gamma$ is said to be $f=f(\Gamma):=c(\Gamma)-1$.
\medskip

One of the most challenging open problems in commutative algebra is the Wilf conjecture, proposed by H. Wilf in 1978 \cite{wilf}: it foretells the inequality
$$
c(\Gamma) \leq e(\Gamma) \cdot \delta (\Gamma)
$$
between the conductor and the product of the embedding dimension and the delta invariant of a numerical semigroup $\Gamma$. It is customary to call Wilf number to the difference \(e(\Gamma) \cdot \delta (\Gamma)-c(\Gamma)\) in order to state the Wilf conjecture as the positivity of this quantity.
\medskip

The Wilf conjecture is known to be true in several cases, see e.g.~ Delgado \cite{delgado1}, Dobbs and Matthews \cite{dm}, Eliahou \cite{eliahou}, Eliahou and Fromentin \cite{eliahou-fromentin}, Fromentin and Hivert \cite{fromentin-hivert}, Kaplan \cite{kaplan}, Moscariello and Sammartano \cite{mossam} and Sammartano \cite{samartano}. However this is still an open question, and in the meanwhile several related problems have been treated in order to gain a better understanding of the conjecture (see for example \cite{mdelgado,delgado3,almiyano-wilf}). One of those nearby problems is related to another interesting number that can be associated to a semigroup, the Eliahou number.
\medskip

Eliahou \cite{eliahou} has been able to relate the Wilf conjecture to an invariant \(E(\Gamma)\), nowadays called the Eliahou number associated to a semigroup (cf. \cite{delgado1}; see also subsection \ref{subsec:Eli}) such that \(e(\Gamma) \cdot \delta (\Gamma)-c(\Gamma)\geq E(\Gamma)\). Therefore, any numerical semigroup with positive Eliahou number satisfies the Wilf conjecture. Unfortunately, there exist numerical semigroups with negative Eliahou number \cite{delgado1,eliahou,eliahou-fromentin} and thus the Wilf conjecture is reduced to study those semigroups with negative Eliahou number. 
\medskip

The characterization of those numerical semigroups with negative Eliahou number is a huge challenge, as observed e.g. in \cite{delgado1,delgado3,eliahou-fromentin}, and very few general properties about them are known. One of the main goals of this paper is to establish necessary conditions for a numerical semigroup in order to have negative Eliahou number, see Section \ref{sec:negeli}. In particular, this conditions will allow us to show some new examples of numerical semigroups with negative Eliahou number \ref{ex:negativeEli}. To do so, we will make use of two recent techniques introduced in the study of the Wilf conjecture.
\medskip

On the one hand, the authors proposed in \cite{almiyano-wilf} to associate a function \(W_\Gamma(k):\mathbb{N}\rightarrow \mathbb{Z}\) to the numerical semigroup \(\Gamma\) in order to gain a better understanding of the Wilf conjecture, see e.g. Theorem 3.2 and Theorem 4.1 in \cite{almiyano-wilf} and, in particular, of the conditions leading to the vanishing of the Wilf number. In this direction, we also propose to study the constant
$$
\mu_{\Gamma}:=\min \{k\in \mathbb{N}: W_{\Gamma}(k)\geq 0\}.
$$

In this paper we present a step forward towards the understanding of the conjecture: Eliahou reduced the problem to the study of the Wilf number for semigroups with negative Eliahou number. If we denote by \(e_s\) the number of minimal generators less than \(c(\Gamma),\) our approach will reduce the study of the semigroups with negative Eliahou number to the investigation of semigroups with \(\mu_{\Gamma}\geq e_s\). Concretely, we will show in Theorem \ref{thm:wilfboundeli} that the Eliahou number is bounded below by \(W_\Gamma(e_s).\) Thus, the negativity of the Eliahou number implies \(\mu_{\Gamma}>e_s\). To find examples  with \(\mu_{\Gamma}>e_s\) and positive Eliahou number is not difficult as Example \ref{ex:3} shows; and obviously because  semigroups with negative Eliahou number seem to be rare \cite{delgado1,eliahou-fromentin}. Therefore, the condition \(\mu_{\Gamma}>e_s\) may lead to an easier characterization of the interesting family of semigroups to study.

\medskip

On the other hand, in a recent preprint Rosales et al.~\cite{Rosconce} introduce the concept of concentration of a numerical semigroup: set \(\operatorname{next}_\Gamma(s) :=
\min \{x \in \Gamma \;|\; s < x\}\); the \emph{concentration of a numerical semigroup}
is then defined as 
$$
\mathbf{C}(\Gamma) = \max \{\operatorname{next}_\Gamma(s) - s \;|\; s\in \Gamma\setminus\{0\}\}.
$$

If we call $m(\Gamma):=\min (\Gamma\setminus\{0\})$ the \emph{multiplicity} of the semigroup, it is clear that a numerical semigroup with concentration \(1\) is of the form \(\{0,m(\Gamma),\rightarrow\}\), where the arrow $\rightarrow$ means that from $m(\Gamma)$ on all natural numbers belong to the set. The numerical semigroups with concentration \(2\) have been characterized by Rosales et al.~\cite{Rosconce}.
\medskip

The essence of the Wilf conjecture may be expressed as how often elements of \(\Gamma\) occurr in the integral interval \([0,c]\cap \mathbb{N}\). From this viewpoint, it is natural to ask for semigroups with fixed concentration satisfying the Wilf conjecture. One should obviously expect that smaller concentration should lead to a higher frequency of occurrence of elements of \(\Gamma\) in the integral interval \([0,c]\cap \mathbb{N}\); this is indeed the case as shown in our Theorem \ref{thm:m/keli}.
\medskip

The current manuscript is organized as follows. Section \ref{sect:pre} presents all the required techniques of the theory of numerical semigroups needed for the remainder of the paper ---besides the concentration, namely the Wilf function and the Eliahou number of a semigroup, as well as some partitions of the set of elements in the semigroup due to both Eliahou and Sammartano. Section \ref{sect:pos} appeals to the utility of the concentration in order to show the nonnegativity of the Wilf function, see Propositions \ref{prop:2k} and \ref{prop:k+1}. In Section \ref{sec:negeli} we establish criteria for the negativity resp. positivity of the Eliahou number in terms of the Wilf function resp. concentration of the numerical semigroup: this is the content of Theorem \ref{thm:42} resp.~Theorem \ref{thm:m/keli}. Moreover, we show new examples of semigroups with negative Eliahou number \ref{ex:negativeEli}. In the concluding Section \ref{sec:hdense} we present a family of examples (under the name \emph{highly concentrated} numerical semigroups) that show the utility of the previous results.

\section{Preliminaries}\label{sect:pre}

Let $\Gamma$ be the numerical semigroup generated by $a_1,\ldots , a_e$; this fact will be expressed by writing $\Gamma=\langle a_1,\ldots , a_e \rangle$. We will assume that $a_1,\ldots , a_e$ are a \emph{minimal} system of generators. We will moreover consider that they are ordered with respect to the natural ordering and write \(G:=\{a_1=m<a_2<\cdots<a_e\}\); observe that trivially $m(\Gamma)=a_1$. For generalities on numerical semigroups the reader is referred to the book of Rosales and Garc\'ia-S\'anchez \cite{RosalesGarciaSanchez}; see also the book of Ram\'irez-Alfons\'in \cite{ram}.
\medskip

In the study of the Wilf conjecture there are two features which are relevant for the remainder of the paper, namely the Wilf function and the Eliahou number of a numerical semigroup. We will summarize the fundamentals of these two topics, as well as their implications in the computation of the delta-invariant.

\subsection{The Wilf function}\label{subsec:Wilffun}
In our previous paper \cite{almiyano-wilf}, we defined the map
\[\begin{array}{cccl}
	W_{\Gamma}:&\mathbb{N}&\rightarrow&\mathbb{Z}\\
	&k&\mapsto &W_{\Gamma}(k):=k\delta(\Gamma)-c(\Gamma)
\end{array}\]
as an attempt to give more instruments for the investigation of the Wilf conjecture.  
The function $W_{\Gamma}$ is what we called the \emph{Wilf function} of the semigroup $\Gamma$. As already mentioned, for $k=e(\Gamma)=e$, the nonnegativity $W_{\Gamma}(e)\geq 0$ expresses thus the statement of the Wilf conjecture; indeed, the study of the behaviour of the Wilf function contributes to the understanding of Wilf's conjecture, as shown in \cite{almiyano-wilf}. 
\medskip

In general, $k=e$ is not the minimal value making $W_{\Gamma}(k)$ nonnegative. This means that the Wilf number $W_{\Gamma}(e)$ does not yield in general a sharp bound for the positivity of the Wilf function. From this point of view, it would be certainly interesting to investigate the constant
$$
\mu_{\Gamma}:=\min \{k\in \mathbb{N}: W_{\Gamma}(k)\geq 0\},
$$
where obviously $2\leq \mu_{\Gamma}\leq m$, as mentioned in \cite{almiyano-wilf}. This constant will play a role in the study of the positivity of the Wilf function by the concentration of the semigroup which will be done in Section \ref{sect:pos}.

\subsection{The Eliahou number}\label{subsec:Eli}

An ultimate tool towards the solution of the Wilf conjecture seems to be the Eliahou number, whose definition will be recalled in the sequel. 
\medskip

Let $q:=q(\Gamma)=\lceil\frac{c(\Gamma)}{m(\Gamma)}\rceil$ be the $q$-number of $\Gamma$. We set
\begin{align*}
	\nu(\Gamma)=&\nu= qm-c\\
	\se(\Gamma)=& | \{s\in \Gamma : s<c\} |\\
	G(\Gamma)=G:=&\{a_1,\ldots , a_e\} \\
\end{align*}

Eliahou \cite{eliahou} introduced the following partition of the interval \([-\nu,c+m]\):
\[J_\alpha:=[\alpha m-\nu,(\alpha+1)m-\nu] \ \ \mbox{for} \ \alpha =0, 1, \ldots, q .\]

Of course, the interval \([-\nu,c+m]\) is meant to be $[-\nu,c+m]\cap \mathbb{N}$, but we leave out the intersection with $\mathbb{N}$ in order to discharge the notation, and we will assume this and all occurring intervals to be in $\mathbb{N}$. 
\medskip

The main advantage of Eliahou's partition is that the last subinterval is exactly \(J_q=[c,c+m]\) (we will consider a different partition due to Sammartano \cite{samartano} in Section \ref{sect:pos} in which this property does not hold). 
\medskip

Set \(p_q:=J_q\cap G\) and \(d_q:=[c,c+m]\setminus p_q\). Let us denote by \(e_s:=|G\cap\se(\Gamma)|\) resp. \(e_c:=|p_q|\) the number of minimal generators of the semigroup which are smaller than the conductor resp. bigger than the conductor. Obviously, \(e(\Gamma)=e_s+e_c.\) Eliahou introduced the following invariant \cite{eliahou}, named the \emph{Eliahou number} of $\Gamma$ after him, cf. \cite{delgado1}:

\[E(\Gamma)=e_s\delta(\Gamma)-q|d_q|+\nu.\]

The Eliahou number plays a role in the Wilf conjecture in virtue of the following \cite[Proposition 3.11]{eliahou}:

\begin{theorem}[Eliahou]
	Let \(\Gamma\) be a numerical semigroup, then \(W_\Gamma(e)\geq E(\Gamma)\).
\end{theorem}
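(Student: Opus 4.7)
The plan is a direct algebraic manipulation, where the only genuine inequality at the end is the elementary bound $\delta(\Gamma)\geq q(\Gamma)$.

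First, I would expand the difference $W_\Gamma(e)-E(\Gamma)$ using the definitions on display in the excerpt. Since $e=e_s+e_c$, one obtains
\[
W_\Gamma(e)-E(\Gamma)=e\delta-c-\bigl(e_s\delta-q|d_q|+\nu\bigr)=e_c\,\delta+q|d_q|-(c+\nu).
\]
Now I would invoke the identity $c+\nu=qm$, immediate from the definition $\nu=qm-c$, to turn the right-hand side into $e_c\,\delta+q|d_q|-qm$.

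Second, I would count the integers in $J_q=[c,c+m]$. Because every integer $\geq c$ belongs to $\Gamma$, the set $J_q$ is partitioned into the minimal generators it contains, $p_q$, and the decomposable elements $d_q$. Using $|p_q|+|d_q|=m$ (which is the cardinality of $J_q$ in Eliahou's disjoint partition), I substitute $|d_q|=m-e_c$ to get
\[
W_\Gamma(e)-E(\Gamma)=e_c\,\delta+q(m-e_c)-qm=e_c\bigl(\delta-q\bigr).
\]
At this point the claim reduces to showing $\delta(\Gamma)\geq q(\Gamma)$, since $e_c\geq 0$ trivially.

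Third, the bound $\delta\geq q$ is handled by exhibiting $q$ explicit small elements of $\Gamma$. The multiples $0,m,2m,\ldots,(q-1)m$ are all in $\Gamma$ because $m\in\Gamma$. Their largest member is $(q-1)m=qm-m=c+\nu-m$, which is strictly less than $c$ because $\nu<m$ (a consequence of $q=\lceil c/m\rceil$). Hence these $q$ pairwise distinct integers all lie in $\Gamma\cap[0,c-1]$, yielding $\delta\geq q$ and concluding the argument.

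The main obstacle is not conceptual but bookkeeping: one must be careful with the convention for the lengths of the intervals $J_\alpha$ (closed versus half-open), because the value of $|p_q|+|d_q|$ is what converts the mixed expression $e_c\delta+q|d_q|-qm$ into the clean factor $e_c(\delta-q)$. Once that accounting is fixed, the theorem is essentially the tautology $e_c\cdot(\delta-q)\geq 0$.
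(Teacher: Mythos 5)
Your argument is correct and is essentially Eliahou's original proof of this result (Proposition 3.11 of \cite{eliahou}), which the paper cites rather than reproduces: expand $W_\Gamma(e)-E(\Gamma)$, use $c+\nu=qm$ and $|p_q|+|d_q|=|J_q|$ to reduce to $e_c(\delta-q)\geq 0$, and verify $\delta\geq q$ via the multiples $0,m,\dots,(q-1)m$. Your bookkeeping caveat is the right one to flag: the paper writes $J_q=[c,c+m]$ as a closed interval, but the half-open convention (so $|J_q|=m$) is what Eliahou uses and what makes the difference equal exactly $e_c(\delta-q)$; with the closed-interval count $m+1$ one instead gets $e_c(\delta-q)+q$, which is still nonnegative, so the theorem holds under either reading.
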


It is an important result the fact that negative Eliahou numbers can be effectively attained, see \cite[Corollary~14, Corollary~35]{delgado1}:

\begin{theorem}[Delgado]
	For any \(z\in\mathbb{Z}\) there exist a numerical semigroup with \(E(\Gamma)=z.\) In particular there exist numerical semigroups with arbitrarily negative Eliahou number.
\end{theorem}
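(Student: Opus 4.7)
My plan is to construct explicit parametric families of numerical semigroups whose Eliahou numbers can be evaluated in closed form, and then show that the attainable values sweep out all of $\mathbb{Z}$. As a preliminary move, I would rewrite the defining expression in a more workable form: since $|d_q|=(m+1)-e_c$ and $\nu=qm-c$, a direct manipulation of the definition of $E(\Gamma)$ yields
$$E(\Gamma)=e_s\,\delta(\Gamma)+q(e_c-1)-c(\Gamma),$$
so that $E$ is expressed in terms of a short list of invariants that can be read off from the Apéry set of $\Gamma$ with respect to $m$. This is the formula I intend to control throughout.

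The positive half of the claim is the easy side. Starting from a ``dense'' semigroup with many small generators and $[c,c+m]\cap G$ substantial, both of the terms $e_s\,\delta$ and $q(e_c-1)$ contribute favourably while $c$ remains moderate. By rescaling or by iteratively enlarging such a family I expect $E(\Gamma)$ to grow without bound, so that every sufficiently large positive integer is attained.

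For the harder side, I would design a family $\{\Gamma_n\}$ whose Apéry set with respect to a fixed multiplicity $m$ has a prescribed ``staircase'' profile, chosen so that $q$ grows linearly in $n$ while $e_s$, $\delta$ and $e_c$ stay bounded. In the reduced formula this forces the $-c$ contribution to dominate, and $E(\Gamma_n)\to-\infty$. Once arbitrarily large $|z|$ are realised on both sides, the remaining task is interpolation: I would introduce a one-parameter perturbation of $\Gamma_n$ (for instance, lengthening one leg of the staircase by one unit, or exchanging a single minimal generator for an adjacent non-generator) and track exactly how each of $e_s,e_c,\delta,q,c$ changes. If one can exhibit a perturbation whose net effect on $E$ is $\pm 1$, then the intermediate value principle for integers forces every integer in a long interval to be attained, and hence every integer in $\mathbb{Z}$.

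The principal obstacle lies precisely in that last step. A seemingly innocuous modification of the generating set — adding, removing, or shifting a single generator by one — can simultaneously perturb all five of the invariants appearing in the formula for $E$, and in particular can change $c$ by much more than one. The delicate part of the construction is therefore to isolate a modification whose combined effect on these invariants sums to an increment of exactly one unit of $E$, which in practice demands that one prescribes the semigroups via their Apéry sets rather than via their minimal generating sets and then verifies the balance of changes by hand.
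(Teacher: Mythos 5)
This statement is not proved in the paper at all: it is quoted as a known result of Delgado, with a pointer to \cite[Corollary~14, Corollary~35]{delgado1}, where explicit parametric families (semigroups of the form $\langle a_1,a_2,a_3\rangle_r$ with three generators below the conductor) are written down, their invariants $e_s$, $\delta$, $q$, $e_c$, $c$ are computed in closed form as functions of the parameters, and the resulting closed-form expression for $E$ is checked to take every integer value. So the relevant comparison is between your proposal and that construction. Your preliminary reformulation is correct: from $|d_q|=(m+1)-e_c$ and $\nu=qm-c$ one does get $E(\Gamma)=e_s\,\delta(\Gamma)+q(e_c-1)-c(\Gamma)$, and working from the Ap\'ery set rather than the minimal generators is also in the spirit of what is actually done.

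However, as it stands your text is a programme rather than a proof, and the gap is exactly where you locate it. No family $\{\Gamma_n\}$ is ever exhibited, so neither the claim that $E$ is unbounded below nor the claim that it is unbounded above is established; and more importantly, unboundedness in both directions does not imply surjectivity onto $\mathbb{Z}$ without the unit-step interpolation, which you describe as desirable but do not carry out. Note that this same missing step also undermines your ``easy'' positive half: ``$E(\Gamma_n)\to+\infty$'' does not yield ``every sufficiently large positive integer is attained'' unless you again control the increments. The honest fix is the one Delgado implements: fix a family with few enough free parameters that $e_s$, $\delta$, $q$, $e_c$ and $c$ can all be written explicitly (in his families $e_s=3$ and $\delta$, $q$, $c$ are linear in the parameters), substitute into the formula for $E$, and observe that varying one parameter by one changes $E$ by exactly one while another parameter drives $E$ to $-\infty$. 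Until you produce such a family and verify those two facts by direct computation, the statement is not proved.
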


\subsection{\(\delta(\Gamma)\) from the Apéry set}
There is a remarkable system of generators ---by no means minimal--- that can be attached to a numerical semigroup $\Gamma$: let $s \in \Gamma\setminus \{0\}$, the Ap\'ery set of $\Gamma$ with respect to $s$ is defined to be the set
\[
\mathrm{Ap}(\Gamma, s)=\{w\in\Gamma\,:\;w-s\notin \Gamma\},
\]
see Ap\'ery \cite{apery}, or also Kunz and Herzog \cite[Lemma 4.2]{KH}.
\medskip

Observe that the cardinality of $\mathrm{Ap}(\Gamma, s)$ is $s$, and that $\mathrm{Ap}(\Gamma, s)=\{w_0<w_1<\dots<w_{s-1}\}$ where $w_i=\min \{ z\in \Gamma : z \equiv i \ \mathrm{mod}\ s\}$; obviously, $w_0=0$. We will always consider the particular case $s=m:=m(\Gamma)$, for which $w_1=a_2$ and $w_{m-1}=c-1+a_1=c+m-1$.
\medskip

In this subsection we will leave Eliahou's partition, and following Sammartano \cite{samartano} we will adopt instead a partition in subintervals of length $m-1$ of the form
\[
I_\alpha:=[\alpha m,(\alpha+1)m-1], \ \mbox{for} \ \alpha =0, 1, \ldots, L,
\]

where $L:=\lfloor \frac{c-1}{m}\rfloor=\lfloor \frac{w_{m-1}}{m}\rfloor-1$ denotes the integer part of the quotient between the conductor of \(\Gamma\) minus $1$ ---the Frobenius number of $\Gamma$--- and its multiplicity. Hence,  we can write \(c-1=Lm+\rho'\) with \(0\leq \rho'\leq m-1\) and \(\rho'\neq 0\) because \(c-1\neq \Gamma\). Therefore, we can rewrite \(c= Lm+\rho\) with \(\rho=\rho'+1\) and \(2\leq\rho\leq m\). Thus we have in particular the following identity.
\begin{lemma}\label{lem:auxL}
	Let \(\Gamma\) be a numerical semigroup with conductor \(c\) and multiplicity \(m\), and set $L:=\lfloor \frac{c-1}{m}\rfloor$. Then, 
	\[L=\Bigg\{\begin{array}{cc}
		\lfloor \frac{c}{m}\rfloor&\text{if \(c\) is not a multiple of \(m\) }\\
		&\\
		\frac{c}{m}-1& \text{if \(c\) is a multiple of \(m\) }
	\end{array}\]
\end{lemma}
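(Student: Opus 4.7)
The plan is to prove the lemma by a direct case split on whether or not $m$ divides $c$, using Euclidean division of $c$ by $m$ in each case and reading off the floor of $(c-1)/m$ from the resulting expression.

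First I would handle the case where $c$ is not a multiple of $m$. Write the Euclidean division $c = qm + r$ with $1 \leq r \leq m-1$, so that by definition $\lfloor c/m \rfloor = q$. Then $c-1 = qm + (r-1)$ with $0 \leq r-1 \leq m-2$, which is again a valid Euclidean division by $m$, so $\lfloor (c-1)/m \rfloor = q = \lfloor c/m \rfloor$, as claimed.

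Next I would handle the case where $c$ is a multiple of $m$. Write $c = qm$, so that $c/m = q$. Then $c - 1 = (q-1)m + (m-1)$ with $0 \leq m-1 \leq m-1$, whence $\lfloor (c-1)/m \rfloor = q-1 = c/m - 1$, finishing the proof.

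There is no real obstacle: the statement is a purely arithmetic fact about the floor function, independent of any semigroup-theoretic content, and the two cases above exhaust the possibilities. The only reason the lemma is recorded at all is to have a clean reference for later passages that need to convert between $L = \lfloor (c-1)/m \rfloor$ and the $q$-number $q(\Gamma) = \lceil c/m \rceil$ of Subsection~\ref{subsec:Eli}, since $\lceil c/m \rceil = \lfloor c/m \rfloor$ when $m \nmid c$ and equals $c/m$ when $m \mid c$.
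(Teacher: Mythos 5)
Your proof of the lemma is correct and is essentially the argument the paper itself relies on (the paper gives no formal proof, only the preceding remark that $c-1=Lm+\rho'$ with $0\le\rho'\le m-1$, which is the same Euclidean-division case split you carry out). One caveat on your closing side remark: the identity $\lceil c/m\rceil=\lfloor c/m\rfloor$ for $m\nmid c$ is false --- in that case $\lceil c/m\rceil=\lfloor c/m\rfloor+1$; the correct statement, and the one the paper actually draws from the lemma, is that in both cases $q=\lceil c/m\rceil=L+1$.
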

In particular, Lemma \ref{lem:auxL} implies that \(q=L+1\) and that the number of subintervals \(J_\alpha\) is one more than the number of subintervals \(I_\alpha.\)
\medskip

Following the notation of \cite{samartano} and \cite{mossam}, for $j=1, \ldots , m-1$ we define
$$
\eta_j=|\{\alpha\in\mathbb{N} : | I_\alpha\cap\Gamma |=j\}|\quad\text{and}\quad n_{\alpha}=|\{s\in \Gamma \cap I_{\alpha}: s<f \}|.
$$

The number $\eta_j$ can be computed from the Ap\'ery set $\mathrm{Ap}(\Gamma, m)$ in the following way:

\begin{lemma}[\cite{samartano}, Proposition 13]\label{lemma:21}
	For any $j=1, \ldots , m-1$ we have
	\[
	\eta_j=\Big \lfloor \frac{w_j}{m}\Big \rfloor-\Big \lfloor \frac{w_{j-1}}{m}\Big \rfloor.
	\]
\end{lemma}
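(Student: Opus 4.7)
The plan is to count, for each $\alpha$, the number of elements of $\Gamma$ in $I_\alpha$ one residue class modulo $m$ at a time, and then exploit the size-ordering $w_0<w_1<\cdots<w_{m-1}$ of the Apéry set to convert the answer into a difference of floors.

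First I would observe that $I_\alpha=[\alpha m,(\alpha+1)m-1]$ meets each residue class modulo $m$ in exactly one integer, namely $\alpha m + r$ for $r=0,1,\ldots,m-1$. Since $m\in\Gamma$, the semigroup is closed under addition of $m$, so the elements of $\Gamma$ congruent to $r$ modulo $m$ form the arithmetic progression $w^{(r)},\, w^{(r)}+m,\, w^{(r)}+2m,\ldots$, where $w^{(r)}$ denotes the unique element of $\mathrm{Ap}(\Gamma,m)$ in that residue class. Consequently $\alpha m + r\in\Gamma$ iff $\alpha m + r\geq w^{(r)}$, which (using $w^{(r)}\equiv r\pmod{m}$) is equivalent to $\alpha\geq \lfloor w^{(r)}/m\rfloor$. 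Letting $r$ range over $\{0,\ldots,m-1\}$ and noting that $\{w^{(0)},\ldots,w^{(m-1)}\}$ equals $\{w_0,\ldots,w_{m-1}\}$ as a set yields the key identity
\[|I_\alpha\cap \Gamma| \;=\; \bigl|\{i\in\{0,\ldots,m-1\}\,:\,\lfloor w_i/m\rfloor \leq \alpha\}\bigr|.\]

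Next I would use the ordering $w_0<w_1<\cdots<w_{m-1}$: the sequence $\lfloor w_i/m\rfloor$ is non-decreasing in $i$, so the index set on the right is an initial segment $\{0,1,\ldots,k(\alpha)\}$. Hence $|I_\alpha\cap \Gamma|=j$ precisely when $\lfloor w_{j-1}/m\rfloor \leq \alpha < \lfloor w_j/m\rfloor$, and counting the integers $\alpha$ in this half-open interval gives $\eta_j=\lfloor w_j/m\rfloor-\lfloor w_{j-1}/m\rfloor$. The harmless edge case where $w_{j-1}$ and $w_j$ share the same floor (i.e.\ they land in the same $I_\alpha$) automatically returns $\eta_j=0$, matching the fact that $|I_\alpha\cap \Gamma|$ then skips the value $j$ at that $\alpha$.

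The only genuinely non-routine step is the first reduction: recognising that membership of $\alpha m+r$ in $\Gamma$ collapses to a single floor inequality via the Apéry representative $w^{(r)}$. Once that identity is in hand, the rest is a straightforward prefix-counting argument and no further combinatorial input about $\Gamma$ is needed.
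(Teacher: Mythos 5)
Your proof is correct. The paper itself offers no proof of this lemma---it is quoted verbatim from Sammartano (Proposition 13 of \cite{samartano})---and your argument is the standard one behind that result: the reduction of ``$\alpha m+r\in\Gamma$'' to the floor inequality $\alpha\geq\lfloor w^{(r)}/m\rfloor$ via the Ap\'ery representative, followed by the prefix count using the size ordering $w_0<\cdots<w_{m-1}$ (a distinction you were right to keep separate from the residue-indexed $w^{(r)}$, since the two indexings do not coincide and only the size ordering makes $\lfloor w_i/m\rfloor$ monotone).
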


Therefore we can compute \(\delta(\Gamma)\) in terms of the Apéry set as follows:
\begin{proposition}\label{prop:deltaaper}
	Let \(\Gamma\) be a numerical semigroup, then
	\[\delta(\Gamma)=m \Big \lfloor \frac{w_{m-1}}{m} \Big \rfloor - \sum_{j=0}^{m-1} \Big \lfloor \frac{w_j}{m}\Big \rfloor +\rho - m.\]
\end{proposition}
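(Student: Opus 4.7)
The strategy is to count $\delta(\Gamma)=|\Gamma\cap[0,c-1]|$ interval by interval using the partition $\{I_\alpha\}_{\alpha=0}^L$, reduce the resulting sum to $\sum_j j\,\eta_j$, and then apply the telescoping identity of Lemma~\ref{lemma:21} to reach the desired closed form.

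First I would write
\[
\delta(\Gamma)=\sum_{\alpha=0}^{L}\bigl|\Gamma\cap I_\alpha\cap[0,c-1]\bigr|
\]
and treat the boundary interval $I_L$ separately. Since $c=Lm+\rho$ with $2\leq\rho\leq m$ (Lemma~\ref{lem:auxL}), the set $I_L\setminus[0,c-1]=[c,(L+1)m-1]$ has exactly $m-\rho$ elements, and all of them lie in $\Gamma$ because they are beyond the conductor. The intervals $I_\alpha$ with $\alpha<L$ are contained entirely in $[0,c-1]$, so no truncation is needed. Hence
\[
\delta(\Gamma)=\sum_{\alpha=0}^{L}|\Gamma\cap I_\alpha|\;-\;(m-\rho).
\]

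Next I would verify that $1\leq|\Gamma\cap I_\alpha|\leq m-1$ for each $\alpha\in\{0,\dots,L\}$. The lower bound is immediate since $\alpha m\in\Gamma$. For the upper bound, assume $I_\alpha\subseteq\Gamma$; translation by multiples of $m$ then forces every integer $\geq\alpha m$ to belong to $\Gamma$, yielding $c\leq\alpha m$; but $\alpha m\leq Lm<c$ since $\rho\geq 2$, a contradiction. Consequently the indices $\alpha=0,\dots,L$ split according to the value of $|\Gamma\cap I_\alpha|\in\{1,\dots,m-1\}$, so that
\[
\sum_{\alpha=0}^{L}|\Gamma\cap I_\alpha|=\sum_{j=1}^{m-1}j\,\eta_j.
\]

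Finally, substituting $\eta_j=\lfloor w_j/m\rfloor-\lfloor w_{j-1}/m\rfloor$ from Lemma~\ref{lemma:21} and rearranging by Abel summation (and recalling $w_0=0$) gives
\[
\sum_{j=1}^{m-1}j\,\eta_j=(m-1)\Big\lfloor\tfrac{w_{m-1}}{m}\Big\rfloor-\sum_{j=1}^{m-2}\Big\lfloor\tfrac{w_j}{m}\Big\rfloor=m\Big\lfloor\tfrac{w_{m-1}}{m}\Big\rfloor-\sum_{j=0}^{m-1}\Big\lfloor\tfrac{w_j}{m}\Big\rfloor,
\]
which combined with the boundary correction $\rho-m$ delivers the formula. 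The only delicate point is the check that $|\Gamma\cap I_\alpha|<m$ for every $\alpha\leq L$, so that the sum of cardinalities is exactly captured by the $\eta_j$'s; once this is in place, the remainder is a routine telescoping manipulation.
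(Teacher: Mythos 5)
Your proof is correct and follows essentially the same route as the paper: both reduce $\delta(\Gamma)$ to $\sum_{j=1}^{m-1} j\,\eta_j + \rho - m$ via the partition $\{I_\alpha\}$ with the boundary correction $m-\rho$ on $I_L$, and then telescope using Lemma~\ref{lemma:21}. If anything, you are more explicit than the paper about why $1\leq|\Gamma\cap I_\alpha|\leq m-1$ for $\alpha\leq L$ (and one should also note that $I_\alpha\subseteq\Gamma$ for $\alpha>L$, so those indices contribute to no $\eta_j$), which the paper's proof takes for granted.
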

\begin{proof}
	First of all, it is a trivial observation that \(\delta(\Gamma)=n_0+\cdots+n_L\). Thus, an extensive use of the statement in Lemma \ref{lemma:21} shows that
	
	\begin{align*}
		\delta=&\sum_{j=0}^{L} n_j= \sum_{j=1}^{m-1}(\eta_j\cdot j) +\rho-m =  \sum_{j=1}^{m-1} \bigg (\sum_{i=j}^{m-1} \eta_i \bigg ) +\rho-m \\
		=& \sum_{j=1}^{m-1} \bigg ( \sum_{i=j}^{m-1} \Big \lfloor \frac{w_i}{m}\Big \rfloor-\Big \lfloor \frac{w_{i-1}}{m}\Big \rfloor \bigg) + \rho-m =  \sum_{j=1}^{m-1} \Big ( \Big \lfloor \frac{w_{m-1}}{m}\Big \rfloor-\Big \lfloor \frac{w_{j-1}}{m}\Big \rfloor \Big ) +\rho - m \\
		= & (m-1) \Big \lfloor \frac{w_{m-1}}{m}\Big \rfloor -\sum_{j=1}^{m-2} \Big \lfloor \frac{w_j}{m}\Big \rfloor+\rho - m\\
		= & m \Big \lfloor \frac{w_{m-1}}{m} \Big \rfloor - \sum_{j=0}^{m-1} \Big \lfloor \frac{w_j}{m}\Big \rfloor +\rho - m,
	\end{align*}
	as desired.
\end{proof}

\section{Positivity of the Wilf function associated to the concentration}\label{sect:pos}

As already mentioned in subsection \ref{subsec:Wilffun}, the study of the invariant \(\mu_\Gamma\) seems to be difficult: in fact, this should help to give a sharper inequality than that in the Wilf conjecture. In this direction, very few is known. This section is devoted to use the notion of concentration to show upper bounds for the invariant \(\mu_\Gamma\). 
\medskip

To do so, we are first going to give estimates for the \(\delta\)-invariant and the embedding dimension in terms of the multiplicity of the semigroup and its concentration.


\begin{proposition}\label{prop:deltaestim}
	Let \(\Gamma\) be a numerical semigroup with concentration $\mathbf{C}(\Gamma) =k$ and \(c=Lm+\rho\), then 
	\[\delta(\Gamma)\geq\frac{(L-1)m+\rho}{k}+1.\]
\end{proposition}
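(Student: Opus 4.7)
The plan is to enumerate the elements of $\Gamma$ below the conductor in increasing order and telescope the gaps using the concentration bound. Write
\[
\Gamma\cap[0,c-1]=\{s_0<s_1<\cdots<s_{\delta-1}\},\qquad s_0=0,\ s_1=m,
\]
so that by definition $|\Gamma\cap[0,c-1]|=\delta(\Gamma)$. Since $c-1=f(\Gamma)$ is the Frobenius number and hence is not in $\Gamma$, while $c\in\Gamma$, the element $c$ is precisely $\operatorname{next}_\Gamma(s_{\delta-1})$; set $s_\delta:=c$.

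The key observation is that, by the very definition of $\mathbf{C}(\Gamma)=k$, every consecutive gap starting from the multiplicity is at most $k$: for every $i\in\{1,\ldots,\delta\}$ the element $s_{i-1}$ lies in $\Gamma\setminus\{0\}$, so
\[
s_i-s_{i-1}=\operatorname{next}_\Gamma(s_{i-1})-s_{i-1}\le k.
\]
(The hop $s_1-s_0=m$ is excluded from the concentration bound because $s_0=0$, and this is the only place where care is needed.) I would emphasize this subtlety since it is the sole reason the estimate cannot simply be $\delta\ge c/k$.

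Telescoping from $s_1=m$ to $s_\delta=c$ yields
\[
c-m=s_\delta-s_1=\sum_{i=2}^{\delta}(s_i-s_{i-1})\le (\delta-1)k,
\]
so that $\delta(\Gamma)\ge \frac{c-m}{k}+1$. Substituting $c=Lm+\rho$ gives $c-m=(L-1)m+\rho$, and we obtain
\[
\delta(\Gamma)\ge \frac{(L-1)m+\rho}{k}+1,
\]
which is the desired inequality. The argument is essentially a one-step telescoping; there is no real obstacle beyond correctly excluding the initial hop $0\to m$ from the concentration bound, as the concentration is only defined over $\Gamma\setminus\{0\}$.
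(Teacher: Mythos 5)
Your proof is correct: the single global telescoping from $s_1=m$ to $s_\delta=c$, with each consecutive difference $\operatorname{next}_\Gamma(s_{i-1})-s_{i-1}\le k$ for $s_{i-1}\in\Gamma\setminus\{0\}$, gives exactly $\delta-1\ge (c-m)/k$, which is the stated bound after substituting $c=Lm+\rho$. The underlying mechanism is the same as the paper's (bound each consecutive gap by the concentration and sum), but the organization differs: the paper first partitions $[0,c]$ into the Sammartano intervals $I_\alpha=[\alpha m,(\alpha+1)m-1]$, proves $|I_\alpha\cap\Gamma|\ge m/k$ for $1\le\alpha\le L-1$ and $|[Lm,c-1]\cap\Gamma|\ge\rho/k$ separately by telescoping within each block, and then adds these up, whereas you telescope once over the whole range. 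Your version is shorter and avoids the bookkeeping of whether the right endpoint $(\alpha+1)m$ is counted in each block; the paper's blockwise version has the advantage that the intermediate estimate $|I_1\cap\Gamma|\ge m/k$ is exactly what is reused in Proposition \ref{prop:embedding} to bound $e_s$, which a purely global telescoping does not immediately yield. One small slip: you assert the bound $s_i-s_{i-1}\le k$ for all $i\in\{1,\dots,\delta\}$ before excluding $i=1$; the index set should be $\{2,\dots,\delta\}$ from the start, as your own parenthetical remark and the subsequent sum $\sum_{i=2}^{\delta}$ make clear. This does not affect the validity of the argument.
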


\begin{proof}
	Let us denote by \(A_\alpha:=I_\alpha\cap \Gamma.\) For \(1\leq \alpha\leq L-1\) consider the set
	\[A'_\alpha:=\{b_1:=\alpha m<\cdots<b_s:=(\alpha +1)m\;|\;b_i\in \Gamma\}.\]
	
	Thus, \(|A_\alpha|=|A'_\alpha|-1=s-1.\) On the other hand, since we are assuming concentration \(k\), we have that
	\[m=(b_s-b_{s-1})+\cdots+(b_2-b_{1})\leq k(s-1).\]
	Hence,
	\(|A_\alpha|=s-1\geq \frac{m}{k}.\)
	
	A simple observation shows that \(|A_0|=1\), and so
	\[\delta(\Gamma)=1+\sum_{\alpha=1}^{L-1}|A_\alpha|+(|B_L|-1),\]
	where \(B_L:=\{x_1:=Lm<\cdots<x_t:=c=Lm+\rho\}.\)
	
	Again, since the concentration is assumed to be \(k\), we have \(|B_L|-1\geq \rho/k.\) Therefore
	
		\[\delta(\Gamma)=1+\sum_{\alpha=1}^{L-1}|A_\alpha|+|B_L|\geq \frac{(L-1)m+\rho}{k}+1,\]
		as we wished.
\end{proof}
\begin{rem}
	Observe that the bound in Proposition \ref{prop:deltaestim} is sharp: consider $$\Gamma =W_{m,q}=\langle m, qm+1, \ldots , qm+(m-1)\rangle$$ for integers $m,q$ such that $m>1$ and $q>0$. These semigroups have concentration \(\mathbf{C}(\Gamma)=k=m\), and moreover \(\rho=m\) and \(L=q-1.\) Also it is easy to check that \(\delta=L+1\). Thus,
	
	\[L+1=\delta\geq \frac{(L-1)m+m}{m}+1=L+1.\]
	The semigroups $W_{m,q}$ are indeed very interesting in the context of Wilf conjecture: the authors proved in \cite[Theorem 4.8]{almiyano-wilf} the following characterization of the nonpositivity of the Wilf function: \(\Gamma=W_{m,q}\) for \(q\geq 1\) if and only if \(W_{\Gamma}(k)\leq 0\) for all \(1\leq k\leq m\).
\end{rem}
\begin{rem}
	In the particular case of \(k=2\) and \(c>2m,\) Rosales et al. \cite[Lemma 2]{Rosconce} show the inequality \(\delta\geq m/2+2\). The assumption \(c>2m\) leads to \(L\geq 2\), hence our Proposition \ref{prop:deltaestim} covers their result. 
\end{rem}

\begin{proposition}\label{prop:embedding}
	Let \(\Gamma\) be a numerical semigroup with concentration \(\mathbf{C}(\Gamma)=k\) and conductor \(c>2m\), then \(e_s\geq m/k\). In particular, the embedding dimension is bounded below by $m/k$, i.e. \(e\geq m/k.\)
\end{proposition}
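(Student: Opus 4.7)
The plan is to reduce the bound on $e_s$ to a bound on the number of semigroup elements in the single interval $I_1=[m,2m-1]$, which is already controlled by the concentration argument used in Proposition \ref{prop:deltaestim}.

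First I would use the hypothesis $c>2m$ to ensure that $L=\lfloor(c-1)/m\rfloor\geq 2$; in particular the interval $I_1=[m,2m-1]$ lies entirely below $c$, so every element of $\Gamma\cap I_1$ is a small element of $\Gamma$. Next I would repeat the counting step from the proof of Proposition \ref{prop:deltaestim} for $\alpha=1$: list the elements of $\Gamma\cap [m,2m]$ as $b_1=m<b_2<\cdots<b_s=2m$; since consecutive gaps are bounded by $\mathbf{C}(\Gamma)=k$, the telescoping identity
\[
m=(b_s-b_{s-1})+\cdots+(b_2-b_1)\leq k(s-1)
\]
gives $|I_1\cap\Gamma|=s-1\geq m/k$.

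The key observation is then that every element $x\in\Gamma\cap I_1$ is necessarily a minimal generator of $\Gamma$: if $x=y+z$ with $y,z\in\Gamma\setminus\{0\}$, then $y,z\geq m$, whence $x\geq 2m$, contradicting $x\leq 2m-1$. Hence $\Gamma\cap I_1\subseteq G\cap\se(\Gamma)$, and consequently
\[
e_s=|G\cap\se(\Gamma)|\geq|\Gamma\cap I_1|\geq \frac{m}{k}.
\]
The bound $e\geq e_s+e_c\geq e_s\geq m/k$ then follows at once.

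I do not foresee a serious obstacle: all the ingredients are present in the preceding proposition and the only new ingredient is the standard fact that semigroup elements strictly between $m$ and $2m$ must be minimal generators, which is a one-line argument. The hypothesis $c>2m$ is used exclusively to guarantee that the interval $I_1$ is made of small elements, so that the inequality passes from the naive counting to $e_s$ rather than merely $e$.
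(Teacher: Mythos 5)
Your proposal is correct and follows essentially the same route as the paper: the paper's proof likewise observes that every element of $I_1\cap\Gamma$ is a minimal generator and invokes the counting argument from Proposition \ref{prop:deltaestim} to get $|I_1\cap\Gamma|\geq m/k$. Your write-up merely spells out the two points the paper leaves implicit, namely that $c>2m$ forces $L\geq 2$ so the counting lemma applies to $\alpha=1$ and the elements of $I_1\cap\Gamma$ are small, hence contribute to $e_s$ and not just to $e$.
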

\begin{proof}
	Any element of the interval \(I_1\cap\Gamma\) is a minimal generator of the semigroup. Hence \(e\geq |I_1\cap\Gamma|\geq m/k\), where the last inequality holds because of the arguments in the proof of Proposition \ref{prop:deltaestim}.
\end{proof}
The nonnegativity of the Wilf function can be related to the concentration of the semigroup in the following manner.

\begin{proposition}\label{prop:2k}
	Let \(\Gamma\) be a numerical semigroup with concentration \(\mathbf{C}(\Gamma)=k\), then \(W_\Gamma(2k)\geq 0\). In particular, \(2k\geq\mu_\Gamma.\)
\end{proposition}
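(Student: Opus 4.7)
My plan is to deduce the inequality directly from the $\delta$-estimate of Proposition~\ref{prop:deltaestim}. Writing $c = Lm + \rho$ with $2 \leq \rho \leq m$, that proposition yields $\delta(\Gamma) \geq \frac{(L-1)m + \rho}{k} + 1$. I then multiply both sides by $2k$ and rearrange using $Lm + \rho = c$, which gives
\[
2k\,\delta(\Gamma) \;\geq\; 2(L-1)m + 2\rho + 2k \;=\; 2c - 2m + 2k.
\]
Subtracting $c$ from both sides produces the clean intermediate inequality
\[
W_\Gamma(2k) \;=\; 2k\,\delta(\Gamma) - c \;\geq\; c + 2k - 2m,
\]
so that the proof reduces to verifying $c + 2k \geq 2m$.

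In the generic regime $L \geq 2$ (equivalently $c \geq 2m$) the inequality $c + 2k \geq 2m$ is immediate, and in fact the estimate above is fairly loose. The main obstacle I anticipate is the tight boundary case $L \leq 1$, i.e. $c \leq 2m$. There the inequality $c + 2k \geq 2m$ has to be squeezed out of the concentration hypothesis itself rather than from the crude bound of Proposition~\ref{prop:deltaestim}. The natural angle of attack is to exploit that $\mathrm{next}_\Gamma(m) - m \leq k$ and that $2m \in \Gamma$, which together constrain how small $\rho = c - m$ can be; a direct count of $\Gamma \cap [m,c]$ using concentration then refines the lower bound on $\delta(\Gamma)$ enough to close the gap in this residual regime.

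The second assertion, $2k \geq \mu_\Gamma$, is a direct corollary once $W_\Gamma(2k) \geq 0$ is established, by the very definition of $\mu_\Gamma$ as the smallest integer at which the Wilf function becomes nonnegative; no additional argument is required for this part. Overall, the single genuinely delicate step is the small-$L$ boundary case, while the bulk of the proof is an arithmetic manipulation of the bound supplied by Proposition~\ref{prop:deltaestim}.
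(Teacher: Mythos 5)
Your handling of the main regime is correct and essentially parallel to the paper's: where you apply Proposition~\ref{prop:deltaestim} twice to obtain $2k\delta(\Gamma)\geq 2(c-m)+2k$ and are left needing $c+2k\geq 2m$, the paper instead writes $2k\delta=k\delta+k\delta$ and bounds one copy by $(L-1)m+\rho=c-m$ (Proposition~\ref{prop:deltaestim}) and the other by $m$ via $\delta\geq e_s\geq m/k$ (Proposition~\ref{prop:embedding}). Either way the inequality closes as soon as $c\geq 2m$, and your derivation of $2k\geq\mu_\Gamma$ from $W_\Gamma(2k)\geq 0$ is immediate from the definition, as you say.

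The genuine gap is the boundary case $m<c<2m$, which you defer to a sketch; that sketch cannot be completed, because in that regime the inequality $c+2k\geq 2m$ --- and in fact the proposition itself --- fails. Consider $\Gamma=\langle 10,12\rangle_{14}=\{0,10,12,14,\rightarrow\}$. This is a numerical semigroup (every sum of two nonzero elements is at least $20$), with $m=10$, $c=14$, concentration $\mathbf{C}(\Gamma)=2$ and $\delta(\Gamma)=3$, so that $W_\Gamma(2k)=W_\Gamma(4)=12-14=-2<0$ and $\mu_\Gamma=5>4=2k$. Moreover Proposition~\ref{prop:deltaestim} is attained with equality here ($\delta=\rho/k+1=3$), so no sharper count of $\Gamma\cap[m,c]$ of the kind you propose can rescue the argument. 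You should not feel singled out: the paper's own proof has the same defect, since it invokes Proposition~\ref{prop:embedding}, whose hypothesis $c>2m$ is not granted by the statement of Proposition~\ref{prop:2k}. The honest conclusion is that the proposition needs the additional hypothesis $c\geq 2m$, under which your argument (and the paper's) is complete.
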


\begin{proof}
	Let us write \(c=Lm+\rho\) with $L:=\lfloor \frac{c-1}{m}\rfloor$ and \(2\leq \rho\leq m.\)
	By Proposition \ref{prop:deltaestim} we have \(k\delta(\Gamma)\geq(L-1)m+\rho\). On the other hand, since \(G\subset\Gamma\setminus\{0\}\) and \(e=|G|\), Proposition \ref{prop:embedding} implies \(k\delta(\Gamma)\geq m.\) Therefore
	\[W_\Gamma(2k)=2k\delta(\Gamma)-c\geq (L-1)m+\rho+m-c=0,\]
	and we are done.
\end{proof}

The inequalities in Proposition \ref{prop:2k} can be improved by adding additional hypothesis:

\begin{proposition}\label{prop:k+1}
	Let \(\Gamma\) be a numerical semigroup with concentration \(\mathbf{C}(\Gamma)=k\). If \(\delta(\Gamma)\geq m-k\), then \(W_\Gamma(k+1)\geq 0\). In particular, \(k+1\geq\mu_\Gamma.\)
\end{proposition}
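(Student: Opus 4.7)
The plan is to combine Proposition \ref{prop:deltaestim} with the extra hypothesis $\delta(\Gamma) \geq m-k$ in a single short arithmetic step. Write $c = Lm + \rho$ with $2 \le \rho \le m$, so that proving $W_\Gamma(k+1) \geq 0$ amounts to showing $(k+1)\delta(\Gamma) \geq Lm + \rho$.

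First, I would apply Proposition \ref{prop:deltaestim}, which, after clearing the denominator, reads
\[
k\,\delta(\Gamma) \;\geq\; (L-1)m + \rho + k.
\]
Note that the ``$+1$'' at the end of the bound in Proposition \ref{prop:deltaestim} is essential here: it is what contributes the term $+k$ after multiplying by $k$, and this is precisely the slack that the present statement is designed to exploit.

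Next, I would simply add the hypothesis $\delta(\Gamma) \geq m - k$ to the above inequality. Combining these two lower bounds gives
\[
(k+1)\,\delta(\Gamma) \;=\; k\,\delta(\Gamma) + \delta(\Gamma) \;\geq\; \bigl[(L-1)m + \rho + k\bigr] + (m - k) \;=\; Lm + \rho \;=\; c(\Gamma),
\]
which is the desired inequality $W_\Gamma(k+1) = (k+1)\delta(\Gamma) - c(\Gamma) \geq 0$. The conclusion $k+1 \geq \mu_\Gamma$ follows at once from the definition of $\mu_\Gamma$ as the minimum nonnegative index of the Wilf function.

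There is essentially no obstacle here: the whole argument is a two-line computation, and the only subtle point is recognizing that the additive constant $+1$ in Proposition \ref{prop:deltaestim} converts, after multiplication by $k$, into exactly the amount $k$ that must be covered by the hypothesis $\delta(\Gamma) \geq m-k$. In other words, the strength of the present proposition over Proposition \ref{prop:2k} is entirely concentrated in this ``$+1$'' refinement of the delta estimate, so the work has already been done earlier and this statement is the immediate payoff.
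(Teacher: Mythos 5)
Your proof is correct and follows essentially the same route as the paper: both apply Proposition \ref{prop:deltaestim} in the cleared form $k\delta(\Gamma)\geq(L-1)m+\rho+k$ and then add the hypothesis $\delta(\Gamma)\geq m-k$ to obtain $(k+1)\delta(\Gamma)\geq Lm+\rho=c$. Your write-up simply makes explicit the final addition that the paper leaves as ``the claim follows.''
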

\begin{proof}
	Let us write \(c=Lm+\rho\) with $L:=\lfloor \frac{c-1}{m}\rfloor$ and \(2\leq \rho\leq m.\)
	By Proposition \ref{prop:deltaestim} we have \(k\delta(\Gamma)\geq(L-1)m+\rho+k\). Moreover, since \(\delta(\Gamma)\geq m-k\) by hypothesis, the claim follows.
\end{proof}

\section{On the negativity of the Eliahou number}\label{sec:negeli}
The negativity of Eliahou number poses an interesting question within the theory of numerical semigroups: the semigroups having negative Eliahou number seem to be rare and infrequent, as already observed in several works \cite{delgado1,delgado3,eliahou-fromentin}. In this section, we first present a lower bound for the Eliahou number in terms of the Wilf function. This contrasts with the fact that the Eliahou number attains any integer value and allows us to provide a necessary condition for its negativity in terms of the Wilf function. In addition, we continue the section investigating Eliahou numbers in semigroups with fixed concentration. This will allow us to provide a necessary condition for its negativity in terms of the concentration.

\subsection{Eliahou number vs Wilf function}
As already mentioned in Subsection \ref{subsec:Eli}, Delgado showed that the Eliahou number can attain any integer value. The main problem for the computation of the Eliahou number is that the Eliahou partition \(J_\alpha\) defining Eliahou numbers does not coincide with the one defined by Samartano \(I_\alpha\), and Samartano's partition allows an easier calculation of \(\delta\), as Proposition \ref{prop:deltaaper} witnesses. 
\medskip

Our main idea in this subsection is to give a range of the possible values of the Eliahou number by considering the Wilf function; this will allow us to check only the properties of Wilf function in order to study semigroups with a prescribed Eliahou number.

\begin{theorem}\label{thm:wilfboundeli}
	Let \(\Gamma\) be a numerical semigroup with embedding dimension \(e\). Preserving the notation of Subsection \ref{subsec:Eli}, we have the inequalities
	
	\[W_\Gamma(e)\geq E(\Gamma)\geq W_\Gamma(e_s).\]
\end{theorem}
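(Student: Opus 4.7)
The plan is to split the two inequalities and treat them independently, since each reduces after a short algebraic manipulation to an obviously nonnegative quantity.

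The left-hand inequality $W_\Gamma(e) \geq E(\Gamma)$ is a direct citation: it is precisely the theorem of Eliahou (Proposition 3.11 of \cite{eliahou}) already recalled at the end of Subsection \ref{subsec:Eli}. No further work is required for that half of the statement.

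For the right-hand inequality $E(\Gamma) \geq W_\Gamma(e_s)$, the plan is to subtract the two expressions straight from their definitions. Writing $E(\Gamma) = e_s\delta(\Gamma) - q|d_q| + \nu$ and $W_\Gamma(e_s) = e_s\delta(\Gamma) - c$, the $e_s\delta(\Gamma)$ terms cancel and one is left with $E(\Gamma) - W_\Gamma(e_s) = c + \nu - q|d_q|$. Substituting the defining identity $\nu = qm - c$ collapses this to $q(m - |d_q|)$. The final step is to observe that each block $J_\alpha$ of Eliahou's partition contains exactly $m$ integers, so in particular $|J_q| = m$, hence $|d_q| = |J_q| - |p_q| = m - e_c$, and therefore $E(\Gamma) - W_\Gamma(e_s) = q\,e_c \geq 0$.

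The only point requiring a bit of care is the bookkeeping around the cardinality of $J_q$: if read as a closed integer interval $[c, c+m]$ it has $m+1$ elements, but the partition convention intends pieces of length exactly $m$ (which is also what makes the formula for $E(\Gamma)$ consistent with Eliahou's inequality $W_\Gamma(e)\geq E(\Gamma)$). No deeper obstacle is expected: once the conventions are fixed, the argument is essentially a one-line algebraic identity, and the nonnegativity is visible from $q \geq 1$ and $e_c \geq 0$.
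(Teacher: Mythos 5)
Your proposal is correct and follows essentially the same route as the paper: the left inequality is Eliahou's result quoted verbatim, and the right inequality comes from expanding $E(\Gamma)-W_\Gamma(e_s)$ and using $\nu=qm-c$ together with the bound $|d_q|\leq m$. The only difference is cosmetic: the paper stops at the inequality $|d_q|\leq m$, while you push through to the exact identity $E(\Gamma)-W_\Gamma(e_s)=q\,e_c$ (correctly flagging the off-by-one issue in the paper's closed-interval notation for $J_q$), which is a slightly sharper but equivalent conclusion.
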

\begin{proof}
	The first inequality is due to Eliahou \cite[Proposition 3.11]{eliahou}. The second inequality is deduced from the fact that \(|d_q|\leq m,\) so that
	\[W_\Gamma(e)\geq E(\Gamma)=e_s\delta(\Gamma)-q|d_q|+\nu\geq e_s\delta(\Gamma)-qm+\nu=W_\Gamma(e_s),\]
	which is our assertion.
\end{proof}
\begin{ex}\label{ex:3}
	Before presenting some computations, we establish the following standard notation: write \(S=\langle x_1\dots,x_s\rangle_r\) for the minimal semigroup that contains \(\{x_1\dots,x_s\}\) and all the integers greater than or equal to \(r\). This notation is widely used e.g. by Delgado in \cite{delgado1}.
	
	\medskip
	 According to the computations done with the functions in GAP \cite{gap}, the numerical semigroup \(\Gamma:=\langle 30,42,51\rangle_{290}\) has \(W_\Gamma(e_s)<0,\) \(\mu_\Gamma=5,\) \(c=290,\) \(e=23,\) \(\delta=65\) and 
	
	\[W_\Gamma(\mu_{\Gamma})=35<E(\Gamma)=105<W_\Gamma(e)=1205.\]
\end{ex}
Theorem \ref{thm:wilfboundeli} yields a necessary condition for the negativity of Eliahou number in terms of the Wilf function.
\begin{theorem}\label{thm:42}
	Let \(\Gamma\) be a numerical semigroup with Eliahou number \(E(\Gamma)<0\). Then, 
	
	\[W_\Gamma(e)<e_c\delta.\]
	
	In particular, \(\mu_\Gamma>e_s.\)
\end{theorem}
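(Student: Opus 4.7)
The plan is to read both conclusions off directly from Theorem \ref{thm:wilfboundeli}, using only the defining identity $e=e_s+e_c$ and the additivity of $W_\Gamma$ in its argument.

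First, I would observe that the Wilf function is affine in $k$: since $W_\Gamma(k)=k\delta-c$, we have the identity
\[
W_\Gamma(e)=W_\Gamma(e_s)+e_c\,\delta,
\]
which follows from $e=e_s+e_c$. This is the only algebraic manipulation needed; everything else is a logical consequence of the hypothesis $E(\Gamma)<0$.

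Next, I would invoke the right-hand inequality of Theorem \ref{thm:wilfboundeli}, namely $E(\Gamma)\geq W_\Gamma(e_s)$. Combined with the assumption $E(\Gamma)<0$, this immediately forces $W_\Gamma(e_s)<0$. Substituting into the displayed identity yields
\[
W_\Gamma(e)=W_\Gamma(e_s)+e_c\,\delta<e_c\,\delta,
\]
which is the first claim. For the second claim, the strict negativity $W_\Gamma(e_s)<0$ together with the definition
\[
\mu_\Gamma=\min\{k\in\mathbb{N}:W_\Gamma(k)\geq 0\}
\]
means that $e_s$ cannot belong to $\{k:W_\Gamma(k)\geq 0\}$, so $\mu_\Gamma>e_s$.

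I do not anticipate any genuine obstacle: the only subtle point is ensuring that the second inequality of Theorem \ref{thm:wilfboundeli} really is strict once $E(\Gamma)<0$ is imposed, but this is automatic since $W_\Gamma(e_s)\leq E(\Gamma)<0$. Thus the whole statement is, in effect, a corollary of Theorem \ref{thm:wilfboundeli} and the additive decomposition of the Wilf function induced by the partition $e=e_s+e_c$.
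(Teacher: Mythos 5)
Your proof is correct and follows essentially the same route as the paper: both arguments deduce $W_\Gamma(e_s)\leq E(\Gamma)<0$ from Theorem \ref{thm:wilfboundeli} and then use the affine structure of $W_\Gamma$ together with $e=e_s+e_c$ to obtain $W_\Gamma(e)<e_c\delta$. Your version is in fact tidier, since the paper takes an unnecessary detour through $W_\Gamma(1)$ and Proposition \ref{prop:deltaaper}; the only point worth making explicit is that $\mu_\Gamma>e_s$ (rather than merely $\mu_\Gamma\neq e_s$) uses the monotonicity of $k\mapsto k\delta-c$, which is immediate since $\delta>0$.
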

\begin{proof}
	We begin by observing that 
	\[W_\Gamma(1)=-\sum_{j=0}^{m-1} \Big \lfloor \frac{w_j}{m}\Big\rfloor;\]
	this follows by Proposition \ref{prop:deltaaper} and by the fact that \(c=Lm+\rho\) with \(L=\lfloor \frac{w_{m-1}}{m}\rfloor-1\).
	
	 On the other hand, from the linearity of Wilf function we have	\(W_\Gamma(e_s)=(e_s-1)\delta+W_\Gamma(1).\) Moreover, since \(E(\Gamma)<0\), Theorem \ref{thm:wilfboundeli} implies \(W_\Gamma(e_s)<0.\) All this together yields
	\begin{align*}
		W_\Gamma(e-1)&=W_\Gamma(e_s+e_c-1)=(e_s-1)\delta+W_\Gamma(e_c)<-W_\Gamma(1)+W_\Gamma(e_c)=(e_c-1)\delta,
	\end{align*}
	which establishes the desired inequality.
\end{proof}

\subsection{Positivity of the Eliahou number associated to the concentration}
Once we have shown a necessary condition for the negativity of the Eliahou number obtained thanks to the Wilf function, our purpose now is to give a necessary condition for the negativity of Eliahou number in terms of the concentration of the semigroup. To do so, we first need to prove the following.
\begin{theorem}\label{thm:m/keli}
	Let \(\Gamma\) be a numerical semigroup with multiplicity \(m\) and concentration \(\mathbf{C}(\Gamma)=k.\) Write \(c=Lm+\rho\) with \(2\leq \rho\leq m,\) and assume that \(c>2m\). If \(m/k^2>(L+1)/(L-1),\) then \(E(\Gamma)\geq 0\).
\end{theorem}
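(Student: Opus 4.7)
The plan is to chain together the three earlier estimates already at our disposal: the lower bound for $E(\Gamma)$ in terms of the Wilf function (Theorem \ref{thm:wilfboundeli}), the lower bound for $e_s$ from the concentration (Proposition \ref{prop:embedding}), and the lower bound for $\delta(\Gamma)$ from the concentration (Proposition \ref{prop:deltaestim}). The hypothesis $m/k^2 > (L+1)/(L-1)$ is precisely designed to make the product of these two lower bounds exceed $c$, and hence to force $W_\Gamma(e_s) \geq 0$.

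\textbf{Step 1.} By Theorem \ref{thm:wilfboundeli} we have $E(\Gamma) \geq W_\Gamma(e_s) = e_s\,\delta(\Gamma) - c$, so it suffices to show $e_s\,\delta(\Gamma) \geq c$.

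\textbf{Step 2.} Since $c > 2m$, Proposition \ref{prop:embedding} yields $e_s \geq m/k$. Note also that $c > 2m$ together with $\rho \leq m$ forces $L \geq 2$, so the fraction $(L+1)/(L-1)$ in the hypothesis is well defined and positive.

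\textbf{Step 3.} Proposition \ref{prop:deltaestim} gives
\[
\delta(\Gamma) \;\geq\; \frac{(L-1)m + \rho}{k} + 1 \;\geq\; \frac{(L-1)m}{k}.
\]
Multiplying the bounds from Steps 2 and 3,
\[
e_s\,\delta(\Gamma) \;\geq\; \frac{m}{k}\cdot\frac{(L-1)m}{k} \;=\; \frac{(L-1)m^{2}}{k^{2}}.
\]

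\textbf{Step 4.} The hypothesis $m/k^2 > (L+1)/(L-1)$ rewrites as $(L-1)m/k^2 > L+1$, so
\[
\frac{(L-1)m^{2}}{k^{2}} \;>\; (L+1)m \;=\; Lm + m \;\geq\; Lm + \rho \;=\; c,
\]
where we used $\rho \leq m$. Combining this with Steps 1--3 gives $E(\Gamma) \geq W_\Gamma(e_s) > 0$, as required.

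There is essentially no obstacle here beyond checking the algebra; the substantive work has already been done in Propositions \ref{prop:deltaestim}, \ref{prop:embedding} and Theorem \ref{thm:wilfboundeli}. The only minor point to verify carefully is that $L \geq 2$ (so that $(L-1)$ is a positive quantity that can be used as a denominator), which follows from the standing assumption $c > 2m$.
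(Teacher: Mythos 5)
Your proof is correct and follows essentially the same route as the paper: both arguments combine Theorem \ref{thm:wilfboundeli} (equivalently, the bound $E(\Gamma)\geq e_s\delta(\Gamma)-(L+1)m$ coming from $|d_q|\leq m$ and $q=L+1$) with Propositions \ref{prop:embedding} and \ref{prop:deltaestim}, and then the same algebraic manipulation of the hypothesis $m/k^2>(L+1)/(L-1)$. Your explicit verification that $c>2m$ forces $L\geq 2$ is a welcome detail that the paper leaves implicit.
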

\begin{proof}
	First of all, observe that Lemma \ref{lem:auxL} implies that \(L+1=\lceil c/m\rceil=q\). Also \(|d_q|\leq m.\) Therefore
	\[E(\Gamma)\geq e_s\delta(\Gamma)-(L+1)m.\]
	On the other hand, Proposition \ref{prop:deltaestim} together with Proposition \ref{prop:embedding} give us
	\[e_s\delta(\Gamma)-(L+1)m\geq\bigg(\frac{m}{k}\bigg)\bigg(\frac{(L-1)m+\rho+k}{k}\bigg)-(L+1)m.\]
	
	Since \(\rho,k\geq 0\), the claim follows from the hypothesis \(m/k^2>(L+1)/(L-1).\)
\end{proof}

Theorem \ref{thm:m/keli} gives us an easy-to-handle condition which implies the positivity of the Eliahou number. In contrast to the examples of negative Eliahou number given by Delgado \cite{delgado1}, Eliahou \cite{eliahou}, and Fromentin \cite{eliahou-fromentin}, our condition only assumes the knowledge of the multiplicity, the concentration and the conductor of the semigroup. In this way we do not need to compute neither the embedding dimension nor the \(\delta\)-invariant in our case. This leads to the following necessary condition for a semigroup to be a semigroup with negative Eliahou number.

\begin{corollary}\label{cor:en}
	Let \(\Gamma\) be a numerical semigroup with multiplicity \(m\) and concentration \(\mathbf{C}(\Gamma)=k\), and write \(c=Lm+\rho\) with \(2\leq \rho\leq m.\) If \(E(\Gamma)< 0\), then \(m/k^2<(L+1)/(L-1)\).
\end{corollary}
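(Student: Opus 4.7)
The plan is to obtain Corollary \ref{cor:en} as the contrapositive of Theorem \ref{thm:m/keli}, with a minor sharpening in order to get \emph{strict} inequality in the conclusion. Writing the statement in contrapositive form, I would assume that $m/k^2 \geq (L+1)/(L-1)$ and derive $E(\Gamma) \geq 0$; then a small refinement of the estimates will actually produce $E(\Gamma)>0$, which is what is needed to rule out $E(\Gamma)<0$ at the equality case of the hypothesis.

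First I would handle the degenerate ranges of $L$. If $L=0$ or $L=1$, then the right-hand side $(L+1)/(L-1)$ is either negative or undefined, so the asserted inequality is either vacuous or holds under the natural convention that the right-hand side is $+\infty$; in particular, when $L\geq 2$ we automatically have $c=Lm+\rho>2m$, so Theorem \ref{thm:m/keli} is applicable without further work.

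Next, under the assumption $L\geq 2$ I would retrace the two key estimates used in the proof of Theorem \ref{thm:m/keli}: the Eliahou bound $|d_q|\leq m$ (together with $q=L+1$ from Lemma \ref{lem:auxL}), Proposition \ref{prop:embedding} giving $e_s\geq m/k$, and Proposition \ref{prop:deltaestim} giving $\delta(\Gamma)\geq ((L-1)m+\rho)/k +1$. Combining them yields
\[
E(\Gamma)\;\geq\; \frac{m}{k^2}\bigl((L-1)m+\rho+k\bigr) - (L+1)m \;=\; m\!\left(\frac{(L-1)m}{k^2}-(L+1)\right)+\frac{m(\rho+k)}{k^2}.
\]
Now, if one merely assumes $m/k^2\geq (L+1)/(L-1)$, the first summand on the right is nonnegative, while the second summand is strictly positive (since $\rho\geq 2$ and $k\geq 1$). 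Hence $E(\Gamma)>0$, contradicting $E(\Gamma)<0$. This contradiction forces the strict inequality $m/k^2 < (L+1)/(L-1)$ claimed in the corollary.

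I do not expect any serious obstacle: the argument is essentially bookkeeping of the inequalities already assembled in Theorem \ref{thm:m/keli}. The only mild subtlety is extracting the \emph{strict} inequality in the conclusion, which is why one keeps the extra term $m(\rho+k)/k^2$ rather than discarding it as in the proof of Theorem \ref{thm:m/keli}; this strict slack, coming from $\rho\geq 2$, precisely upgrades $\leq$ to $<$.
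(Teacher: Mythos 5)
Your proof is correct and follows essentially the same route as the paper, which states this corollary without proof as the immediate contrapositive of Theorem \ref{thm:m/keli}; your observation that retaining the discarded positive term $m(\rho+k)/k^2$ upgrades the conclusion from $\leq$ to the strict inequality $<$ actually claimed is a worthwhile refinement that the paper leaves implicit. The only soft spot is the $L\leq 1$ discussion: for $L=0$ the conclusion $m/k^2<(L+1)/(L-1)=-1$ is literally false rather than ``vacuous,'' so genuine vacuousness of the implication requires knowing that no semigroup with $c\leq 2m$ has $E(\Gamma)<0$ (which follows from Eliahou's theorem that $q\leq 3$ forces $E(\Gamma)\geq 0$), a degenerate case the paper silently ignores as well.
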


\begin{rem}
	It is not difficult to check that all the semigroups defined by Delgado in \cite{delgado1} that have negative Eliahou number satisfy the inequality \(m/k^2<(L+1)/(L-1)\) and \(k<m.\)
\end{rem}

Here it is natural to ask whether the condition \(m/k^2<(L+1)/(L-1)\) is too restrictive. This seems not to be the case: it is quite easy to construct numerical semigroups satisfying the mentioned inequality. The general trick to find them is to observe that \(1<(L+1)/(L-1)<2\) if \(L\geq 4\) and \((L+1)/(L-1)\geq 2\) if \(1\leq L\leq 3.\) Now, we have two options: either we choose a big multiplicity in order to allow bigger concentrations, or we choose directly small concentrations. Let us illustrate this behaviour with some examples computed with the aid of GAP \cite{gap,gapdelgado}: 
\begin{ex}\label{ex:2}
	Let \(A:=\{1000+25\cdot k\;|0\leq k\leq 39\}\). Let \(\Gamma\) be the numerical semigroup minimally generated by \(A \cup \{1507,1899, 13765, 13790, 13815\}.\) The multiplicity of $\Gamma$ is \(m(\Gamma)=1000,\) the conductor is \(c=13741=13\cdot1000+741\), and the concentration \(\mathbf{C}(\Gamma)=25.\) Thus \(L=13\) and the conditions of Theorem \ref{thm:m/keli} are fulfilled, therefore $\Gamma$ has positive Eliahou number. 
\end{ex}

\begin{ex}\label{ex:1}
	Let us consider the numerical semigroup defined by \[\Gamma=\langle50,55,60,65,70,73,77,81,86,91,96,194,199\rangle.\]
 We see that \(c=190\) and it has concentration \(\mathbf{C}(\Gamma)=5\). Then it fulfils the hypothesis of Theorem \ref{thm:m/keli} and so \(E(\Gamma)>0.\) Moreover, since \(\delta=66>50\) it satisfies the conditions of Proposition \ref{prop:k+1} so \(W(6)\geq 0\). An easy computation shows that \(E(\Gamma)=544\) and \(W(6)=206.\) It is also easily seen that \(\mu_\Gamma=3.\)
 	
	On the other hand, the type of $\Gamma$ is $17$, and it is neither symmetric nor pseudo-symmetric, according to the computations done with the routines in GAP \cite{gap}. Moreover, it is easily checked that it does not fulfils any of the conditions of the main theorems of \cite{mossam,samartano}.
\end{ex}

\subsection{Examples of semigroups with negative Eliahou number}
A few examples of numerical semigroups with negative Eliahou number are known. Some of them already appeared in Eliahou's paper \cite{eliahou}. Those are the unique numerical semigroups with negative Eliahou number and \(c-\delta\leq 60.\) Later, Delgado \cite[Sections~3~and~4]{delgado1} provided several families of numerical semigroups with negative Eliahou number and \(e_s=3.\) In fact, these families offer examples with arbitrarily large negative Eliahou number. Moreover, Delgado showed a few examples with \(e_s=4,5\) in \cite[Tables 6 and 7]{delgado1}. More recently, Eliahou and Fromentin \cite{eliahou-fromentin} presented new families of numerical semigroups with negative Eliahou number, all of them with \(c=4m.\)
\medskip

It is not difficult to check that all the examples provided by Delgado, Eliahou and Fromentin satisfy the conditions of Theorem \ref{thm:42} and Corollary \ref{cor:en}. We wonder whether these necessary conditions may help to find new examples of numerical semigroups with negative Eliahou number. This is the case; in fact we present now a few of them: it is straightforward to check that they do not belong to the above collections of Delgado~resp.~Eliahou and Fromentin \cite{eliahou,eliahou-fromentin}, since in our examples we have \(e_s=4\) and \(c\geq 5m\); to the best of the authors' knowledge, these are not mentioned in the literature.

\begin{ex}\label{ex:negativeEli}
In the following table we show eight numerical semigroups with negative Eliahou number, \(e_s=4\) and concentrations \(70,100.\)
	\begin{table}[H]
		\begin{center}
			\begin{tabular}{c|cccccc} 
				$\Gamma$ & $E(\Gamma)$ & $\mathbf{C}(\Gamma)$  & $e_i$ & $\mu_i$ & $W_i(e_i)$ & $W_i(\mu_i)$   \\
				\hline\hline
				$\langle 100, 170, 171, 176 \rangle_{599}$ & $-1$ & $70$ & $71$ & \textbf{13} & $2880 $ & $38$ \\
				$\langle 100, 270, 272, 275  \rangle_{998}$ & $-2$ & $100$ & $70$ & \textbf{15} &  $4882$ & $52$\\
				$\langle 100, 270, 271, 175  \rangle_{999}$ & $-3$ & $100$ & $70$ & \textbf{12} &  $4881$ & $9$\\
				$\langle 100, 270, 273, 275  \rangle_{1000}$ & $-4$ & $100$ & $70$ & \textbf{12} &  $4880$ & $8$\\
				$\langle 100, 170, 173, 174 \rangle_{597}$ & $-5$ & $70$ & $70$ & \textbf{13} & $2833 $ & $40$ \\
				$\langle 100, 170, 172, 175 \rangle_{598}$ & $-6$ & $70$ & $70$ & \textbf{13} & $2832$ & $39$ \\
				$\langle 100, 170, 173, 175 \rangle_{599}$ & $-7$ & $70$ & $70$ & \textbf{13} & $2831$ & $38$ \\
				$\langle 100, 170, 172, 175  \rangle_{600}$ & $-8$ & $70$ & $70$ & \textbf{13} &  $2830$ & $37$\\
			
			\end{tabular}
			\vspace{0.3cm}
			\caption{Some semigroups with negative Eliahou number.}  \label{tab:table1}
		\end{center}
	\end{table}
Different combinations of the minimal generators and conductors of the examples of Table \ref{tab:table1} allowed us to find \(36\) numerical semigroups with Eliahou number within the interval \([-8,-1].\) Those semigroups are of two types: 
\begin{enumerate}
	\item [\textbf{Type 1}] \(\langle 100,170,a,b\rangle_c\) with \(a,b\in [171,176],\) \(c\in [597,600].\) These semigroups have \(e_s\in\{3,4\}\) and \(c>5m.\)
	\item [\textbf{Type 2}] \(\langle 100,270,a,b\rangle_c\) with \(a,b\in [271,276],\) \(c\in [997,1000].\) These semigroups have \(e_s\in\{3,4\}\) and \(c>9m.\)
\end{enumerate}

There are \(18\) numerical semigroups of type \(1\) and negative Eliahou number and \(18\) numerical semigroups of type \(2\) and negative Eliahou number. All of them can be computed with the help of GAP \cite{gap,gapdelgado} by using the following codes:

\begin{algorithm}[H]
	\caption{Code to compute the \(18\) numerical semigroups negative Eliahou number of type \(1\) }\label{gapcode1}
	\begin{algorithmic}[1]
		\State $L1:=[593..602];;$
		\State $L2:=[171..180];;$
		
	\For { $ i \ \text{in} \ [1..Length(L1)]$ } \do\\
	 \For { $j \ \text{in}\ [1..Length(L2)]$ } \do\\
	\For { $k \ \text{in}\ [1..Length(L2)]$ } \do\\
	 \State $G:=NumericalSemigroup(Union([100,170,L2[j],L2[k]],[L1[i]..900]));$
	 
	\If { $EliahouNumber(G)<0$ }
	
	\State $Print([100,170,L2[j],L2[k]],"\setminus n");$
	
	\State $Print(Conductor(G),"\setminus n");$
	
	\State $Print(EliahouNumber(G),"\setminus n");$
	
	\State $Print("=============================","\setminus n");$
	
	\State fi;
	
	\EndIf
	\State od;
	\EndFor
	\State od;
	
	\EndFor
	\State od;
	
	\EndFor

	\end{algorithmic}
\end{algorithm}
\begin{algorithm}[H]
	\caption{Code to compute the \(18\) numerical semigroups negative Eliahou number of type \(2\) }\label{gapcode2}
	\begin{algorithmic}[1]
		\State $L1:=[993..1005];;$
		\State $L2:=[271..280];;$
		
		\For { $ i \ \text{in} \ [1..Length(L1)]$ } \do\\
		\For { $j \ \text{in}\ [1..Length(L2)]$ } \do\\
		\For { $k \ \text{in}\ [1..Length(L2)]$ } \do\\
		\State $G:=NumericalSemigroup(Union([100,270,L2[j],L2[k]],[L1[i]..9000]));$
		
		\If { $EliahouNumber(G)<0$ }
		
		\State $Print([100,270,L2[j],L2[k]],"\setminus n");$
		
		\State $Print(Conductor(G),"\setminus n");$
		
		\State $Print(EliahouNumber(G),"\setminus n");$
		
		\State $Print("=============================","\setminus n");$
		
		\State fi;
		
		\EndIf
		\State od;
		\EndFor
		\State od;
		
		\EndFor
		\State od;
		
		\EndFor
		
	\end{algorithmic}
\end{algorithm}
\end{ex}

\section{Highly dense numerical semigroups}\label{sec:hdense}

To finish, let us present a class of numerical semigroups satisfying the Wilf conjecture. To do so, we will make use of the results of Section \ref{sect:pos}. Furthermore, we will prove that under certain restrictions they also have positive Eliahou number. We need first the notion of \emph{highly dense} numerical semigroup:

\begin{defin}
	We say that \(\Gamma\) is highly dense if one of the following two conditions is satisfied:
	\begin{enumerate}
		\item \(\Gamma\) has concentration less or equal than \(2\).
		\item \(\Gamma\) has concentration less or equal than \(e(\Gamma)/2\) and \(4\leq e(\Gamma)\).
	\end{enumerate}
\end{defin}
Examples \ref{ex:2} and \ref{ex:1} show already highly dense numerical semigroups with concentration $\mathbf{C}(\Gamma)=5$ resp. $\mathbf{C}(\Gamma)=25$. We employ the terminology \emph{highly dense} due to the fact that small concentrations lead to higher number of elements of the numerical semigroup in the interval \([0,c]\), as Proposition \ref{prop:deltaestim} shows.  

	

\medskip

By definition and the discussion of Section \ref{sect:pos} we have the following.
\begin{proposition}
	Let \(\Gamma\) be a highly dense numerical semigroup. Then \(W_\Gamma(e)\geq 0\).
\end{proposition}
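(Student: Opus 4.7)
My plan is to handle the two conditions of the definition separately. For condition (2), set $k = \mathbf{C}(\Gamma)$; by hypothesis $e \geq 2k$, and Proposition \ref{prop:2k} gives $\mu_\Gamma \leq 2k \leq e$. Since $\delta(\Gamma) \geq 1$, the Wilf function $j \mapsto j\delta - c$ is non-decreasing in $j$, so $W_\Gamma(e) \geq W_\Gamma(\mu_\Gamma) \geq 0$. This settles condition (2) in essentially one step.

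For condition (1), I would first dispatch the easy subcases. When $k = 1$, $\Gamma = \{0, m, m+1, \ldots\}$, so $c = m$, $\delta = 1$, $e = m$, and $W_\Gamma(e) = m\cdot 1 - m = 0$. When $k = 2$ and $e \geq 4$, condition (2) is also satisfied and we are done. When $k = 2$ and $e = 2$, $\Gamma = \langle a_1, a_2 \rangle$ and the classical Sylvester--Frobenius identity gives $\delta = c/2$, hence $W_\Gamma(2) = 0$. The only delicate subcase is $k = 2$ and $e = 3$.

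For that last subcase the plan is to bound $m$ from above. Every element of $\Gamma \cap [m, 2m - 1]$ is a minimal generator, because any sum of two nonzero elements of $\Gamma$ is at least $2m$. Under concentration $2$, the chain of elements of $\Gamma$ beginning at $m$ advances by at most $2$ per step; combined with the fact that the tail $[c, 2m - 1] \subseteq \Gamma$ is automatic when $c \leq 2m - 1$, and that Proposition \ref{prop:embedding} yields $|\Gamma \cap [m, 2m-1]| \geq m/2$ when $c > 2m$, this forces $|\Gamma \cap [m, 2m-1]| \geq \lceil m/2 \rceil$. Hence $e = 3$ already implies $m \leq 6$, and in this range Proposition \ref{prop:deltaestim} gives $\delta \geq m - 2$. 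Then Proposition \ref{prop:k+1} applies and produces $\mu_\Gamma \leq k + 1 = 3 = e$, so that $W_\Gamma(e) \geq 0$.

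The principal obstacle is this last subcase: it does not follow from the one-line application of Proposition \ref{prop:2k} that suffices for condition (2). One must combine a generator-counting argument inside the first multiplicity-window $[m, 2m-1]$ with Proposition \ref{prop:deltaestim} to pin down $\delta \geq m - 2$ before Proposition \ref{prop:k+1} can be invoked.
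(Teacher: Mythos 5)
For condition (2), and for condition (1) when $e\geq 4$, your argument is exactly the paper's: the entire published proof is the single sentence that the claim is a straightforward consequence of Proposition \ref{prop:2k}, i.e.\ $2k\leq e$ together with the monotonicity of $j\mapsto j\delta-c$. You are right that this one-liner does not literally cover the subcases $e\leq 3$ under condition (1), and your treatment of $k=1$ (direct computation on $\{0,m,\rightarrow\}$) and of $k=2$, $e=2$ (Sylvester's $\delta=c/2$) is correct; this is more care than the paper itself takes.

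The gap is in the subcase $k=2$, $e=3$. The reduction to $m\leq 6$ via the chain of minimal generators in $[m,2m]$ is fine, but the assertion that ``in this range Proposition \ref{prop:deltaestim} gives $\delta\geq m-2$'' does not follow. That proposition only yields $\delta\geq \frac{(L-1)m+\rho}{2}+1$, and this quantity is $\geq m-2$ precisely when $(L-3)m+\rho+6\geq 0$; for $L=1$ (that is, $m<c\leq 2m$) and $m\in\{5,6\}$ with $\rho<2m-6$ the bound it delivers is strictly smaller than $m-2$. The conclusion you need is still true, because no numerical semigroup with $\mathbf{C}(\Gamma)=2$ and $e=3$ realizes those parameters --- for instance, for $m=6$ and $c\leq 11$, concentration $2$ together with $e=3$ would force $\Gamma\cap[6,11]=\{6,8,10\}$, contradicting $\gcd=1$; for $m=5$ and $c\in\{7,8\}$ one checks directly that $e\geq 4$ --- but ruling these out is an additional argument that your proposal does not contain. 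Until it is supplied, the invocation of Proposition \ref{prop:k+1} in this subcase is unjustified.
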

\begin{proof}
	This is an straightforward consequence of Proposition \ref{prop:2k}.
\end{proof}
Therefore, highly dense numerical semigroups provide a new family of numerical semigroups satisfying Wilf's conjecture. Moreover, we can use Theorem \ref{thm:wilfboundeli} to show that ---under certain additional hypothesis--- highly dense numerical semigroups have positive Eliahou number.

\begin{corollary}
	Let \(\Gamma\) be a numerical semigroup with \(\mathbf{C}(\Gamma)=k\geq 2\), with conductor \(c>2m\) and satisfying \(e_s\geq 2k\). Then \(E(\Gamma)\geq 0.\)
	
	In particular, any highly dense numerical semigroup with \(e_s\geq 2k\) has positive Eliahou number.
\end{corollary}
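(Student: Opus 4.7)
The plan is to chain together the two main tools already established in the paper: the positivity statement for the Wilf function at $2k$ coming from the concentration bound, and the lower bound for the Eliahou number in terms of the Wilf function. Concretely, I would proceed in three short steps.

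First, since we assume $\mathbf{C}(\Gamma) = k$ and $c > 2m$, Proposition \ref{prop:embedding} applies and therefore the hypotheses used inside the proof of Proposition \ref{prop:2k} are satisfied; this gives $W_\Gamma(2k) \geq 0$. Second, I would exploit the linearity $W_\Gamma(j) = j\,\delta(\Gamma) - c(\Gamma)$ of the Wilf function in $j$, which yields the identity
\[
W_\Gamma(e_s) \;=\; W_\Gamma(2k) + (e_s - 2k)\,\delta(\Gamma).
\]
Under the assumption $e_s \geq 2k$ and since $\delta(\Gamma) \geq 0$, the right-hand side is at least $W_\Gamma(2k) \geq 0$, so $W_\Gamma(e_s) \geq 0$. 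Third, I would invoke the right-hand inequality in Theorem \ref{thm:wilfboundeli}, namely $E(\Gamma) \geq W_\Gamma(e_s)$, to conclude $E(\Gamma) \geq 0$.

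For the particular statement, the two alternative definitions of a highly dense numerical semigroup both imply $k \leq \max\{2, e(\Gamma)/2\}$; in either case, once one additionally assumes $e_s \geq 2k$, the above three-step argument applies verbatim and produces $E(\Gamma) \geq 0$.

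There is really no genuine obstacle here: all the nontrivial work is already hidden in Propositions \ref{prop:deltaestim}, \ref{prop:embedding}, \ref{prop:2k} and in Theorem \ref{thm:wilfboundeli}, and the role of the present corollary is to package them together under the quantitative condition $e_s \geq 2k$. The only minor care needed is to verify that the hypothesis $c > 2m$ propagates correctly through Proposition \ref{prop:embedding} into Proposition \ref{prop:2k}, which is immediate.
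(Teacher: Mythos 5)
Your proposal is correct and coincides with the paper's own argument: the paper likewise chains $E(\Gamma)\geq W_\Gamma(e_s)\geq W_\Gamma(2k)\geq 0$ using Theorem \ref{thm:wilfboundeli}, the linearity of the Wilf function, and Proposition \ref{prop:2k}. Your extra remark about the hypothesis $c>2m$ feeding into Proposition \ref{prop:embedding} is a reasonable clarification but does not change the route.
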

\begin{proof}
	By Theorem \ref{thm:wilfboundeli} we have \(E(\Gamma)\geq W_\Gamma(e_s).\) Since \(e_s\geq 2k\), the linearity of the Wilf function together with Proposition \ref{prop:2k} shows 
	\[E(\Gamma)\geq W_\Gamma(e_s)\geq W_\Gamma(2k)\geq 0,\]
	as desired.
\end{proof}
\begin{corollary}
	Let \(\Gamma\) be a numerical semigroup with concentration \(k\geq 2,\) with \(e_s\geq k+1\), delta-invariant \(\delta(\Gamma)\geq m-k\), and conductor \(c>2m\). Then \(E(\Gamma)\geq 0.\)
	
	In particular, any highly dense numerical semigroup with \(e_s\geq k+1\)  and \(\delta(\Gamma)\geq m-k\) has positive Eliahou number.
\end{corollary}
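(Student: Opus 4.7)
The proof should be a direct composition of three results established earlier, paralleling the structure of the preceding corollary. First I would invoke Theorem \ref{thm:wilfboundeli}, which gives the inequality \(E(\Gamma)\geq W_\Gamma(e_s)\). This reduces the task to showing that \(W_\Gamma(e_s)\geq 0\) under the stated hypotheses.

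Next I would exploit the linearity of the Wilf function in its argument: since \(W_\Gamma(k)=k\delta(\Gamma)-c(\Gamma)\) is affine in \(k\) with positive slope \(\delta(\Gamma)>0\), it is strictly increasing. Hence the hypothesis \(e_s\geq k+1\) yields
\[
W_\Gamma(e_s)\geq W_\Gamma(k+1).
\]

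Finally, the hypothesis \(\delta(\Gamma)\geq m-k\) together with the concentration assumption \(\mathbf{C}(\Gamma)=k\) puts us in exactly the situation of Proposition \ref{prop:k+1}, which concludes \(W_\Gamma(k+1)\geq 0\). Chaining the three inequalities yields \(E(\Gamma)\geq W_\Gamma(e_s)\geq W_\Gamma(k+1)\geq 0\), as required. The ``in particular'' clause about highly dense numerical semigroups is immediate from the definition, since such semigroups satisfy \(\mathbf{C}(\Gamma)\leq e(\Gamma)/2\) (or concentration \(\leq 2\)), so the additional hypothesis \(e_s\geq k+1\) together with the general framework ensures the result applies.

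There is no substantive obstacle here: the statement is essentially a repackaging of Theorem \ref{thm:wilfboundeli} and Proposition \ref{prop:k+1}, and the role of the hypothesis \(c>2m\) is only to guarantee that Proposition \ref{prop:k+1} is applicable in the regime where Proposition \ref{prop:embedding} (invoked inside its proof via Proposition \ref{prop:deltaestim}) holds. The only verification worth spelling out is that \(\delta(\Gamma)>0\) so that monotonicity of \(W_\Gamma\) is genuine; this is automatic because \(c>2m\) forces \(\Gamma\) to contain \(m\in[0,c)\), so \(\delta(\Gamma)\geq 1\).
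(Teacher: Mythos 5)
Your proof is correct and follows exactly the paper's own argument: apply Theorem \ref{thm:wilfboundeli} to get \(E(\Gamma)\geq W_\Gamma(e_s)\), use monotonicity of the Wilf function with \(e_s\geq k+1\) to descend to \(W_\Gamma(k+1)\), and conclude with Proposition \ref{prop:k+1}. The extra remarks on the role of \(c>2m\) and the positivity of \(\delta(\Gamma)\) are sound but not needed beyond what the paper records.
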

\begin{proof}
	From Theorem \ref{thm:wilfboundeli} we have \(E(\Gamma)\geq W_\Gamma(e_s).\) Since \(e_s\geq k+1\) and \(\delta(\Gamma)\geq m-k\), the linearity of the Wilf function together with Proposition \ref{prop:k+1} shows 
	\[E(\Gamma)\geq W_\Gamma(e_s)\geq W_\Gamma(k+1)\geq 0,\]
	as wished.
\end{proof}


\begin{thebibliography}{99}
 
  \bibitem{almiyano-wilf} P.~Almirón, J.J.~Moyano-Fern\'andez, A new perspective on the Wilf conjecture on numerical semigroups, \href{https://arxiv.org/pdf/2012.01358.pdf}{arXiv: 2012.01358} (2020).
  
  \bibitem{apery} R. Ap\'ery, Sur les branches superlin\'eaires des courbes alg\'ebriques, C.R. Acad. Sci. Paris \textbf{222} (1946), 1198--1200.
 
 \bibitem{delgado1} M.~Delgado, On a question of Eliahou and a conjecture of Wilf, Math. Z. \textbf{288}(1-2)~(2018)~595--627.
 
 \bibitem{mdelgado} M.~Delgado, Conjecture of Wilf: A survey. In: V. Barucci et al. (eds.), \emph{Numerical Semigroups}, 39--62, Springer INdAM Series 40, Springer Nature Switzerland, 2020.
 
 \bibitem{delgado3} M.~Delgado, Trimming the numerical semigroups tree to probe Wilf's conjecture to higher genus, \href{https://arxiv.org/pdf/1910.12377.pdf}{arXiv:1910.12377}.
 
 \bibitem{gapdelgado} M.~Delgado, P. A.~Garcia-Sanchez, J.~Morais, NumericalSgps, a package for numerical semigroups, Version 1.1.11. \href{https://gap-packages.github.io/numericalsgps}{https://gap-packages.github.io/numericalsgps, Mar 2019. Refereed GAP package}.

\bibitem{dm} D.E.~Dobbs, G.L.~Matthews, On a question of Wilf concerning numerical semigroups. In: \emph{Focus on Commutative Rings Research}, 193--202, Nova Sci. Publ., New York, 2006.

\bibitem{eliahou} S.~Eliahou, Wilf’s conjecture and Macaulay’s theorem, J. Eur. Math. Soc. (JEMS) \textbf{20}~(9)~(2018)~2105--2129.

\bibitem{eliahou-fromentin} S.~Eliahou, J.~Fromentin, Near-misses in Wilf's conjecture, Semigroup Forum \textbf{98}(2)~(2019)~285--298.

\bibitem{fgh} R.~Fr\"oberg, C.~Gottlieb, R.~H\"aggkvist, On numerical semigroups, Semigroup Forum \textbf{35}~(1987)~63--83.

\bibitem{fromentin-hivert} J.~Fromentin, F.~Hivert, Exploring the tree of numerical semigroups, Math. Comput. \textbf{85}~(2016)~2553--2568.

\bibitem{gap} GAP – Groups, Algorithms, and Programming, Version 4.10.0. \href{https://www.gap-system.org}{https://www.gap-system.org, Nov 2018.}

\bibitem{kaplan} N.~Kaplan, Counting numerical semigroups by genus and some cases of a question of Wilf, Journal of Pure and Applied Algebra, \textbf{216} (2012) 1016--1032.

\bibitem{KH} E.~Kunz, J.~Herzog, Die Wertehalbgruppe eines lokalen Rings der Dimension 1. Sitzungsberichte der Heidelberger Akademie der Wissenschaften (Mathematische-naturwissenschaftliche Klasse), 27--67. Springer-Verlag (1971).

\bibitem{mossam} A.~Moscariello, A.~Sammartano, On a conjecture by Wilf about the Frobenius number, Math. Z. \textbf{280}(1-2)~(2015)~47--53.

\bibitem{ram} J.L.~Ram\'{\i}rez Alfons\'{\i}n, The {D}iophantine {F}robenius problem, Oxford Lecture Series in Mathematics and its Applications \textbf{30}, Oxford University Press, Oxford (2005).

\bibitem{Rosconce} J.C.~Rosales, M.B.~Branco, M.A.~Traesel, Numerical semigroups with concentration two, Preprint \href{https://arxiv.org/pdf/2103.16723.pdf}{arXiv:2103.16723}.

\bibitem{RosalesGarciaSanchez} J.C.~Rosales, P. A.~Garc\'ia Sanchez, \emph{Numerical Semigroups}, Springer (2009).

\bibitem{samartano} A.~Sammartano, Numerical semigroups with large embedding dimension satisfy Wilf's conjecture, Semigroup Forum \textbf{85}~(2012)~439--447.

\bibitem{wilf} H.~Wilf, A circle-of-lights algorithm for the money-changing problem, Amer. Math. Monthly \textbf{85}~(1978)~562--565.

\end{thebibliography}
\end{document}